\RequirePackage{fix-cm}

\documentclass[smallextended]{svjour3}    
\smartqed
\usepackage{graphicx}
\usepackage{amscd,amsfonts,amssymb,amsmath,latexsym,array,hhline,xcolor,graphicx}

\usepackage{latexsym}
\usepackage{times}

\newcommand\F{\mbox{I\kern-2pt F}}

\begin{document}
	
\title{Exact solution of the ruin problem in the Cram\'er--Lundberg model with proportional investment}

\titlerunning{Exact solution of the ruin problem with proportional investment}

\author{Platon Promyslov \and
    Maxim Romanov \and
    Goluba Yurieva
}

\authorrunning{P. Promyslov, M. Romanov, G. Yurieva}

\institute{Platon Promyslov \at
    HSE University, Faculty of Computer Science, Moscow, Russia \\
    \email{platon.promyslov@gmail.com}
    \and
    Maxim Romanov \at
    Lomonosov Moscow State University, Faculty of Mechanics and Mathematics, Moscow, Russia \\
    \email{mcliz@mail.ru}
    \and
    Goluba Yurieva \at
    Federal Research Center ``Computer Science and Control'' of the Russian Academy of Sciences, Moscow, Russia \\
    \email{goluba.yurieva@gmail.com}
}

\date{Received: date / Accepted: date}

\maketitle

\begin{abstract}
This paper considers the Cram\'er--Lundberg risk model with exponential claims and investments in a financial market consisting of a risk-free and a risky asset. We show that the integro-differential equation for the survival probability reduces to a linear second-order ordinary differential equation belonging to the class of doubly confluent Heun equations. The main result of the paper is the construction of an exact analytical solution to the problem, expressed in terms of Heun functions, and the proof of a verification theorem. The obtained formula allows for an exact quantitative analysis for any initial capital, revealing a nontrivial dependence of the company's reliability on the chosen investment strategy.

\keywords{Survival probability \and Cram\'er--Lundberg model \and Proportional investment \and Integro-differential equation \and Doubly confluent Heun equation \and Exact solution}
\subclass{91G05 \and 60G51 \and 45J05}
\end{abstract}

\section{Introduction}

The classical collective risk theory, dating back to the works of Lundberg and Cram\'er, describes the capital reserve of an insurance company as a process with a constant deterministic drift perturbed by a Poisson stream of claims. In modern actuarial mathematics, this paradigm is extended: it is assumed that the company's capital is not only accumulated but also continuously invested in a financial market. The inclusion of a risky asset, the price of which follows a geometric Brownian motion, fundamentally changes the dynamics of the system: even with light (e.g., exponential) tails of the claim size distribution, the ruin probability acquires a heavy power-law asymptotic behavior \cite{Frolova2002,Kalashnikov2002,Paulsen1993}.

Two main analytical approaches to studying the ruin probability in models with investments have emerged in the literature. The first one is based on the implicit renewal theory \cite{Goldie1991}. This method allows obtaining exact decay rates of the ruin probability at infinity for a broad class of processes (see, e.g., \cite{Albrecher2012,Eberlein2022,KLP2025,KPro2023,Pergamenshchikov2006}). However, being asymptotic by its nature, this approach fundamentally provides no information about the behavior of the survival function for finite values of the initial capital.

The second approach is based on the analysis of integro-differential equations, which, by virtue of martingale properties and the generalized It\^o formula, are satisfied by the survival probability. Within this direction, questions of smoothness, existence, and uniqueness of classical or viscosity solutions are investigated. In \cite{Grandits2004}, the existence of a weak solution, i.e. $W_{\mathrm{loc}}^{2,1}$, was proved under the assumptions that $F$ is continuous in a neighbourhood of zero and that $\mathbb{E}[\xi^{\varepsilon}]<\infty$ for some $\varepsilon>0$. Moreover, a Karamata-type theorem was established, connecting the asymptotic behavior of the ruin probability with the regular variation of the tail of the claim size distribution. In \cite{AK2026}, the existence of a classical solution required the moment condition 
\begin{equation}\label{eq:gamma_condition}
    \mathbb{E}[\xi^{\gamma-1}]<\infty.
\end{equation}
Recently, in \cite{P2026}, it was shown that for the classical Cram\'er--Lundberg model with proportional investment, the existence of a $C^2$-solution also does not require the moment $\gamma-1$; it suffices to have an arbitrarily small moment $\mathbb{E}[\xi^{\varepsilon}]<\infty$ and continuity of the distribution function. Condition \eqref{eq:gamma_condition} is needed only to obtain the exact power-law asymptotics of the ruin probability.

Despite a deep understanding of the qualitative properties of these equations, explicit closed-form analytical solutions are practically absent in the literature. The lack of exact formulas forces researchers to rely on asymptotic estimates or numerical simulations, which inevitably leads to significant errors for small and medium values of the initial capital---precisely the region of greatest practical interest. The present paper aims to fill this gap. The main objective of this study is to construct an exact analytical solution to the ruin problem in the Cram\'er--Lundberg model with proportional investment and exponentially distributed claims.

Our methodology relies on reducing the IDE to a second-order linear ordinary differential equation (ODE) with respect to the derivative of the survival probability. The key observation is that, after a suitable conformal change of variables, the resulting ODE is strictly classified as a doubly confluent Heun equation---an equation with two irregular singular points on the extended complex plane. This allows us, for the first time in the context of actuarial mathematics, to express the survival probability density in terms of special Heun functions.

The main result of the paper is an explicit analytical formula for the survival probability, valid for any initial capital $u \ge 0$. We perform an asymptotic analysis of the fundamental systems of solutions in the neighborhoods of the singularities, isolate the uniquely physically meaningful (integrable) solution, and prove a verification theorem guaranteeing that the constructed function is identical to the survival probability of the stochastic process under consideration. The availability of an exact solution makes it possible not only to confirm known asymptotic results but also to investigate in detail the non-monotonic impact of the share of risky investments on the company's reliability profile.

The paper is organized as follows. Section \ref{sec:model} formalizes the stochastic model of capital dynamics and states the mathematical problem. In Section \ref{sec:reduction}, the IDE is reduced to a differential equation for the density. Section \ref{sec:classification} is devoted to the analysis of the singularities of the obtained ODE and its reduction to the canonical form of the Heun equation. In Section \ref{sec:solution}, the fundamental systems of solutions are constructed, and the exact formula for the survival probability is derived. Section \ref{sec:verification} provides the proof of the verification theorem based on the theory of local martingales. The concluding Section \ref{sec:numerics} contains numerical illustrations and a qualitative analysis of the constructed solution.

\section{Model and problem formulation} 
\label{sec:model}

Let a stochastic basis $(\Omega, \mathcal{F}, \mathbf{F}, \mathbf{P})$ be given, endowed with a filtration $\mathbf{F}=(\mathcal{F}_t)_{t\ge0}$ satisfying the usual conditions (right-continuity and completeness with respect to $\mathbf{P}$-null sets). On this space, the following independent stochastic objects are defined: a one-dimensional $\mathbf{F}$-adapted standard Brownian motion $W = (W_t)_{t \ge 0}$; a homogeneous Poisson process $N = (N_t)_{t \ge 0}$ with intensity $\lambda > 0$; and a sequence $\xi = (\xi_k)_{k \ge 1}$ of independent and identically distributed random variables, independent of $W$ and $N$. In the framework of this paper, we assume that the variables $\xi_k$ follow an exponential distribution with parameter $\mu > 0$ and cumulative distribution function $F(x) = 1 - e^{-\mu x}$ for $x \ge 0$. The aggregate claim process is a compound Poisson process given by
\begin{equation*}
    Z_t = \sum_{k=1}^{N_t} \xi_k.
\end{equation*}

Consider an insurance company with an initial capital $u \ge 0$. The premium income is collected deterministically at a constant rate $c > 0$. The company's capital is continuously invested: a fixed proportion $\kappa \in (0,1]$ is allocated to a risky asset, while the remaining part $1-\kappa$ is kept in a risk-free bank account. The asset price dynamics are governed by the equations:
\begin{equation*}
    dB_t = r B_t\,dt, \qquad dS_t = S_t\bigl(a\,dt + \sigma\,dW_t\bigr),
\end{equation*}
where $r \ge 0$ is the risk-free interest rate, $a > r$ is the expected return of the risky asset, and $\sigma > 0$ is its volatility. The capital process $X = (X_t)_{t \ge 0}$ with the initial condition $X_0 = u$ satisfies the following stochastic differential equation (SDE) with jumps:
\begin{equation}
    dX_t = (c + a_\kappa X_t)\,dt + \sigma_\kappa X_t\,dW_t - dZ_t, \quad X_0 = u,
    \label{eq:SDE}
\end{equation}
where we have introduced the effective parameters of the investment portfolio:
\begin{equation*}
    a_\kappa := r + \kappa(a-r), \qquad \sigma_\kappa := \kappa\sigma.
\end{equation*}
Since we exclude the degenerate case $\kappa = 0$, the diffusion component in \eqref{eq:SDE} is strictly positive for $X_t > 0$.

The time of ruin $\tau$ is defined as the first exit time of the capital process into the strictly negative half-line:
\begin{equation*}
    \tau = \inf\{t > 0 : X_t < 0\},
\end{equation*}
with the convention $\inf\varnothing = \infty$. Because the diffusion coefficient vanishes at $X_t=0$ and the drift $c > 0$ is strictly positive at this point, according to Feller's boundary classification (see \cite[Chapter 5]{Karatzas1991}), zero is an inaccessible (entrance) boundary for the continuous component of the process. Consequently, the sample paths cannot cross the zero level continuously: ruin can only occur at a jump time, meaning that $X_\tau < 0$ almost surely on the set $\{\tau < \infty\}$.

The main object of our study is the ultimate survival probability (on an infinite time horizon), considered as a function of the initial capital:
\begin{equation*}
    \Phi(u) := \mathbf{P}(\tau = \infty \mid X_0 = u), \quad u \ge 0.
\end{equation*}

For the problem to be probabilistically meaningful, ensuring that ruin does not occur almost surely for every initial capital, the asymptotic drift of the logarithm of the geometric Brownian motion must be positive:
\begin{equation}
    a_\kappa - \frac{1}{2}\sigma_\kappa^2 > 0.
    \label{eq:nondegeneracy}
\end{equation}
This condition, well known from the works \cite{Paulsen1993,Paulsen1998}, ensures that in the absence of jumps, the process $X_t$ tends to $+\infty$ $\mathbf{P}$-a.s. Under condition \eqref{eq:nondegeneracy}, the survival probability satisfies the natural boundary condition at infinity: $\lim_{u \to \infty}\Phi(u)=1$. The value of the function at zero, $\Phi(0)$, is not known a priori; however, due to the strictly positive premium rate $c > 0$, instantaneous ruin is impossible, implying $\Phi(0) > 0$. In the subsequent sections, we will show that the exact value of $\Phi(0)$ is uniquely determined from the normalization condition of the global analytical solution.

\section{Reduction of the IDE to an ordinary differential equation}
\label{sec:reduction}

According to \cite[Theorem 1]{P2026}, under the nondegeneracy condition \eqref{eq:nondegeneracy}, the survival probability $\Phi$ belongs to the class $C^2((0,\infty))$, and its derivative $\Phi'$ is absolutely continuous on $[0,\infty)$. Consequently, on the interval $(0,\infty)$, the function $\Phi$ is a classical solution to the linear integro-differential equation:
\begin{equation}
    \frac{1}{2}\sigma_\kappa^2 u^2\Phi''(u) + (a_\kappa u+c)\Phi'(u) =
    \lambda\Phi(u)\overline{F}(u) - \lambda\int_0^u\bigl(\Phi(u-y)-\Phi(u)\bigr)dF(y),
    \label{eq:IDE}
\end{equation}
where $\overline{F}(y) = e^{-\mu y}$ is the tail of the exponential distribution. 

After integrating by parts and changing the variable $z=u-y$ in the integral, we arrive at an equation with respect to the survival probability density $g(u) := \Phi'(u)$:
\begin{equation}
    \mathcal{L}[g](u) = \lambda e^{-\mu u}\Bigl(\Phi(0) + \int_0^u g(z)e^{\mu z}dz\Bigr),
    \label{eq:integral_g}
\end{equation}
where $\mathcal{L}$ is a first-order linear differential operator:
\begin{equation*}
    \mathcal{L}[g](u) := \frac{1}{2}\sigma_\kappa^2 u^2 g'(u) + (a_\kappa u+c)g(u).
\end{equation*}

Let us denote the right-hand side of \eqref{eq:integral_g} by $J(u)$. Direct differentiation yields the relation $J'(u) = -\mu J(u) + \lambda g(u)$, which implies that $\bigl(\frac{d}{du}+\mu\bigr)J(u) = \lambda g(u)$. Applying this annihilating operator to both sides of \eqref{eq:integral_g}, we completely eliminate the integral nonlocality:
\begin{equation*}
    \frac{d}{du}\mathcal{L}[g](u) + \mu\mathcal{L}[g](u) - \lambda g(u) = 0.
\end{equation*}
Expanding the action of the operator $\mathcal{L}$ and grouping the terms with the corresponding derivatives, we obtain a second-order linear homogeneous ODE for the function $g(u)$:
\begin{equation}
    \frac{1}{2}\sigma_\kappa^2 u^2 g''(u) + \Bigl[\frac{1}{2}\mu\sigma_\kappa^2 u^2 + (\sigma_\kappa^2+a_\kappa)u + c\Bigr]g'(u) + \Bigl[\mu a_\kappa u + (a_\kappa+\mu c-\lambda)\Bigr]g(u) = 0.
    \label{eq:ODE_raw}
\end{equation}

\begin{remark}
    The method of eliminating the integral term via the differential operator $\bigl(\frac{d}{du}+\mu\bigr)$ heavily relies on the exponential form of the tail $\overline{F}(y) = e^{-\mu y}$. For general classes of distributions, the IDE fundamentally cannot be reduced to a finite-order ODE (see the analysis in \cite{KPuk}).
\end{remark}

For further qualitative analysis, we introduce the dimensionless structural parameters:
\begin{equation*}
    \gamma := \frac{2a_\kappa}{\sigma_\kappa^2}, \quad \beta := \frac{2c}{\sigma_\kappa^2}, \quad \nu := \frac{2\lambda}{\sigma_\kappa^2}.
\end{equation*}
The nondegeneracy condition \eqref{eq:nondegeneracy} in these notations takes the form $\gamma > 1$, and the strict positivity of the premium rate and the jump intensity yields $\beta > 0$ and $\nu > 0$. Multiplying equation \eqref{eq:ODE_raw} by $2/\sigma_\kappa^2$, we obtain the canonical form of the differential equation for the survival density:
\begin{equation}
    u^2 g''(u) + \bigl[\mu u^2 + (\gamma+2)u + \beta\bigr]g'(u) + \bigl[\mu\gamma u + (\mu\beta+\gamma-\nu)\bigr]g(u) = 0.
    \label{eq:ODE_final}
\end{equation}
The resulting equation \eqref{eq:ODE_final} is the central analytical object of the present paper. Its investigation using the methods of the analytical theory of differential equations on the complex plane will be carried out in the next section.

\section{Analysis of singularities and equation classification}
\label{sec:classification}

To choose an adequate analytical solution method and identify the equation in terms of special functions, let us investigate its properties on the extended complex plane $\overline{\mathbb{C}}$. We write equation \eqref{eq:ODE_final} in the normal form resolved with respect to the highest derivative:
\begin{equation*}
    g''(u) + p(u)g'(u) + q(u)g(u) = 0,
\end{equation*}
where the meromorphic coefficients are given by the rational functions:
\begin{equation*}
    p(u) = \mu + \frac{\gamma+2}{u} + \frac{\beta}{u^2}, \qquad 
    q(u) = \frac{\mu\gamma}{u} + \frac{\mu\beta+\gamma-\nu}{u^2}.
\end{equation*}
Obviously, the singular points (poles of the coefficients) can be located exclusively at $u=0$ and $u=\infty$.

\paragraph{Analysis of the singularity at zero.}
As $u \to 0$, the asymptotics of the coefficients are $p(u) \sim \beta/u^2$ and $q(u) \sim (\mu\beta+\gamma-\nu)/u^2$. Since $\beta > 0$, the function $p(u)$ has a pole of order exactly two at zero ($\operatorname{ord}(p,0)=2$), and $q(u)$ has a pole of order at most two ($\operatorname{ord}(q,0) \le 2$). The presence of a second-order pole for $p(u)$ violates the Fuchsian regularity condition; therefore, $u=0$ is an irregular singular point. 
The irregularity rank (Poincar\'e rank) $r_0$ is defined as the minimum integer $r \ge 0$ for which the functions $u^{r+1}p(u)$ and $u^{2r+2}q(u)$ are simultaneously holomorphic at zero. From $\operatorname{ord}(p,0)=2$, it immediately follows that $r+1 \ge 2$, i.e., $r \ge 1$. The condition for $q(u)$ requires $2r+2 \ge 2$, which holds for $r \ge 0$. Thus, the minimum integer value is $r_0 = 1$.

\paragraph{Analysis of the singularity at infinity.}
We examine the point at infinity using the involution $t = 1/u$. Setting $\tilde{g}(t) = g(1/t)$, we obtain the transformed coefficients of the equation:
\begin{equation*}
    \tilde{p}(t) = \frac{2}{t} - \frac{1}{t^2} p\left(\frac{1}{t}\right) = -\frac{\mu}{t^2} - \frac{\gamma}{t} - \beta, \qquad 
    \tilde{q}(t) = \frac{1}{t^4} q\left(\frac{1}{t}\right) = \frac{\mu\gamma}{t^3} + \frac{\mu\beta+\gamma-\nu}{t^2}.
\end{equation*}
As $t \to 0$, the function $\tilde{p}(t)$ has a pole of the second order (since $\mu > 0$), and $\tilde{q}(t)$ has a pole of the third order (since $\mu\gamma > 0$). Hence, the point $u=\infty$ is also irregular. The requirement for $t^{r_\infty+1}\tilde{p}(t)$ and $t^{2r_\infty+2}\tilde{q}(t)$ to be holomorphic similarly yields the constraints $r_\infty+1 \ge 2$ and $2r_\infty+2 \ge 3$, from which it follows that the minimum integer rank is $r_\infty = 1$. 

According to the classification of second-order linear differential equations (see \cite[Chapter 3]{Slavyanov2002}), an equation on $\overline{\mathbb{C}}$ with two irregular singular points of rank 1 belongs to the class of doubly confluent Heun equations.

\paragraph{Reduction to canonical form.}
Let us bring equation \eqref{eq:ODE_final} to its canonical form. To do this, we perform a change of the independent variable:
\begin{equation*}
    \zeta = -\mu u, \qquad u \in (0,\infty) \;\mapsto\; \zeta \in (-\infty,0).
\end{equation*}
Defining the function $y(\zeta) = g(-\zeta/\mu)$, we recalculate the derivatives:
\begin{equation*}
    g'(u) = -\mu y'(\zeta), \qquad g''(u) = \mu^2 y''(\zeta).
\end{equation*}
Substituting these expressions into \eqref{eq:ODE_final} and multiplying the entire equation by the factor $\zeta^2/(\mu^2 u^2) \equiv 1$, after grouping the terms, we obtain:
\begin{equation}
    \zeta^2 y''(\zeta) + (-\zeta^2 + c_H\zeta + d_H)y'(\zeta) + (-b_H\zeta + a_H)y(\zeta) = 0,
    \label{eq:DCHE}
\end{equation}
where the characteristic parameters (invariants) have the form:
\begin{equation}
    a_H = \mu\beta+\gamma-\nu, \qquad b_H = \gamma, \qquad c_H = \gamma+2, \qquad d_H = -\mu\beta.
    \label{eq:Heun_params}
\end{equation}
This structural isomorphism allows us to conclude that the solutions of the original equation \eqref{eq:ODE_final} are expressed in terms of special Heun functions, and the local behavior of the survival density in the neighborhoods of zero and infinity is completely and uniquely determined by the parameters \eqref{eq:Heun_params}.

\section{Construction and properties of the exact solution}
\label{sec:solution}

Let us proceed to the construction of fundamental systems of solutions for equation \eqref{eq:DCHE} in the neighborhoods of the irregular singular points $\zeta=0$ and $\zeta=-\infty$, and to the isolation of the unique solution corresponding to the survival probability density $g(u)=\Phi'(u)$, which must be integrable on $(0,\infty)$ and define a valid probability function $\Phi(u)$.

\paragraph{Behavior as $\zeta\to-\infty$.}
In the neighborhood of the irregular point at infinity of rank $1$, the basis solutions are constructed in the form of formal Thomae series (see \cite[Section 3.1]{Slavyanov2002}):
\begin{itemize}
    \item \textit{Power-law solution} $y_1(\zeta)$, decaying as $\zeta\to-\infty$:
    \[
    y_1(\zeta) = (-\zeta)^{-b_H} \sum_{k=0}^{\infty} h_k (-\zeta)^{-k}, \quad h_0=1.
    \]
    Its leading asymptotic term has the form $y_1(\zeta) \sim (-\zeta)^{-b_H} = (-\zeta)^{-\gamma}$.
    \item \textit{Exponential solution} $\tilde{y}_1(\zeta)$:
    \[
    \tilde{y}_1(\zeta) = e^{\zeta}(-\zeta)^{b_H-c_H} \sum_{k=0}^{\infty} \tilde{h}_k (-\zeta)^{-k}, \quad \tilde{h}_0=1.
    \]
    Its asymptotic behavior is given by $\tilde{y}_1(\zeta) \sim e^{\zeta}(-\zeta)^{b_H-c_H} = e^{\zeta}(-\zeta)^{-2}$.
\end{itemize}

\paragraph{Behavior in the neighborhood of $\zeta=0$.}
At the point $\zeta=0$ (an irregular singularity of rank $1$), the fundamental system consists of two linearly independent solutions:
\begin{itemize}
    \item \textit{Regular branch} $y_2(\zeta)$, defined by the Maclaurin series:
    \[
    y_2(\zeta) = \sum_{k=0}^{\infty} f_k\zeta^k, \quad f_0=1.
    \]
    The coefficients $f_k$ satisfy a three-term recurrence relation:
    \begin{equation*}
        d_H (k+1) f_{k+1} + \bigl[k(k + c_H - 1) + a_H\bigr]f_k - (k - 1 - b_H)f_{k-1} = 0,
    \end{equation*}
    with the initial conditions $f_{-1} = 0$ and $f_0 = 1$. For $d_H \neq 0$, this series has a zero radius of convergence and is considered as an asymptotic expansion; however, it uniquely defines a function that is bounded at zero: $y_2(0)=1$. In the standard normalization, this function is identified with the local Heun function:
    \[
    y_2(\zeta)=\mathrm{HeunD}(d_H,\;b_H-1,\;c_H,\;a_H,\;\zeta).
    \]
    \item \textit{Singular branch} $\tilde{y}_2(\zeta)$, possessing an essential singularity. Its leading asymptotic term has the form:
    \[
    \tilde{y}_2(\zeta)\sim\exp\!\Bigl(\frac{d_H}{\zeta}\Bigr)\,\zeta^{2-c_H},\qquad \zeta\to0^-.
    \]
\end{itemize}
Returning to the original variable $u$ using the substitutions $\zeta=-\mu u$, $d_H=-\mu\beta$, and $c_H=\gamma+2$, we obtain the asymptotic behavior of the singular solution in terms of the density:
\[
\tilde{g}_2(u)\sim\exp\!\Bigl(\frac{\beta}{u}\Bigr)\,u^{-\gamma},\qquad u\to0^+.
\]
Here, $\beta=2c/\sigma_\kappa^2>0$, so $\tilde{g}_2(u)$ grows faster than any negative power of $u$ and is not locally integrable at zero: $\tilde{g}_2\notin L^1_{\mathrm{loc}}([0,\varepsilon))$ for any $\varepsilon>0$. Since the survival function $\Phi(u)$ is bounded ($0\le\Phi\le1$), its derivative $g(u)=\Phi'(u)$ must be locally integrable. Consequently, the coefficient of the singular branch in the general solution must be zero. Thus, up to a multiplicative constant, the physically admissible solution is given by the regular branch:
\begin{equation}
    g(u)=C\cdot\mathrm{HeunD}\bigl(-\mu\beta,\;\gamma-1,\;\gamma+2,\;\mu\beta+\gamma-\nu,\;-\mu u\bigr),
    \label{eq:exact_density}
\end{equation}
where $C>0$ is a normalization constant.

Nevertheless, an analytic continuation to infinity can be constructed. The solution $y_2(\zeta)$, which is regular at zero, can be represented upon analytic continuation to infinity as a linear combination of the bases $y_1$ and $\tilde{y}_1$:
\[
y_2(\zeta)=K_1y_1(\zeta)+K_2\tilde{y}_1(\zeta),\qquad \zeta\to-\infty,
\]
where $K_1$ and $K_2$ are connection constants. Passing to the variable $u$, we obtain the density asymptotics:
\begin{equation}
    g(u)=C\cdot y_2(-\mu u)\sim C\bigl(K_1(\mu u)^{-\gamma}+K_2e^{-\mu u}(\mu u)^{-2}\bigr),\qquad u\to\infty.
    \label{eq:asymptotic_g}
\end{equation}
Under the condition $\gamma>1$, both terms tend to zero, with the power-law term decaying slower than the exponential one. If $K_1\neq0$, it determines the leading term. In particular, $g(u)\in L^1(\mathbb{R}_+)$, which is consistent with the integrability requirement.

\paragraph{Normalization and the exact formula for the survival probability.}
To determine the constant $C$, we use the global normalization condition:
\begin{equation}
    \Phi(\infty)=\Phi(0)+\int_0^\infty g(u)\,du=1.
    \label{eq:norm_condition}
\end{equation}
The value $\Phi(0)$ is not a free parameter. Let us consider the original IDE \eqref{eq:IDE} and take the limit as $u\to0^+$. Due to the regularity of $g(u)$, we have $\lim_{u\to0}u^2\Phi''(u)=0$, and the integral term tends to zero. In the limit, we obtain the boundary relation:
\[
c\Phi'(0)=\lambda\Phi(0).
\]
Since \eqref{eq:exact_density} implies $\Phi'(0)=g(0)=C$, we find:
\[
\Phi(0)=\frac{c}{\lambda}C.
\]
Substituting this into \eqref{eq:norm_condition}, we arrive at an equation for $C$:
\[
\frac{c}{\lambda}C + C\int_0^\infty\mathcal{H}(u)\,du =1,
\]
where $\mathcal{H}(u)=\mathrm{HeunD}\bigl(-\mu\beta,\;\gamma-1,\;\gamma+2,\;\mu\beta+\gamma-\nu,\;-\mu u\bigr)$. This yields:
\[
C=\Bigl(\frac{c}{\lambda}+\int_0^\infty\mathcal{H}(u)\,du\Bigr)^{-1}.
\]
The integral converges absolutely: at zero, $\mathcal{H}(u)$ is bounded (the regular branch), and at infinity, $\mathcal{H}(u)=O(u^{-\gamma})$ for $\gamma>1$. Thus, for any initial capital $u\ge0$, the survival probability is given by the explicit formula:
\begin{equation}
    \Phi(u)=C\Bigl(\frac{c}{\lambda}+\int_0^u\mathcal{H}(v)\,dv\Bigr).
    \label{eq:Phi_explicit}
\end{equation}

Summarizing the obtained results, we state the main result of the paper.

\begin{theorem}\label{thm:exact_solution}
    Let the nondegeneracy condition $\gamma>1$ be satisfied. Then the survival probability $\Phi(u)$ for any initial capital $u\ge0$ is given by formula \eqref{eq:Phi_explicit}, and the normalization constant $C$ is determined by the expression
    \begin{equation*}
        C=\left(\frac{c}{\lambda}+\int_0^\infty\mathcal{H}(v)\,dv\right)^{-1}.
    \end{equation*}
\end{theorem}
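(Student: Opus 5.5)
The plan is to run a verification argument. Rather than trusting the formal derivation of Section~\ref{sec:solution}, I will show directly that the function $\Psi(u):=C\bigl(\frac{c}{\lambda}+\int_0^u\mathcal{H}(v)\,dv\bigr)$ has the three properties that characterize $\Phi$ uniquely, and then conclude $\Psi=\Phi$.

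\emph{Step 1: $\Psi$ solves \eqref{eq:IDE}.} Set $g=C\mathcal{H}$. By construction $g$ solves \eqref{eq:ODE_final}, which is equivalent to $(\frac{d}{du}+\mu)\bigl(\mathcal{L}[g]-J\bigr)=0$, where $J$ is the right-hand side of \eqref{eq:integral_g} built with $\Psi(0)=\frac{c}{\lambda}C$. Hence $\mathcal{L}[g]-J=Ke^{-\mu u}$ for some constant $K$. Letting $u\to0^+$, boundedness of the regular branch gives $u^2g'(u)\to0$. Also $\mathcal{L}[g](0)=cg(0)=cC$ and $J(0)=\lambda\Psi(0)=cC$, so $K=0$. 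Undoing the integration by parts then shows that $\Psi\in C^2(0,\infty)$ solves \eqref{eq:IDE}.

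\emph{Step 2: boundary behaviour.} Using $\mathcal{H}=O(u^{-\gamma})$ with $\gamma>1$ from \eqref{eq:asymptotic_g}, the integral converges. The choice of $C$ then gives $\Psi(\infty)=1$.

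\emph{Step 3: martingale argument.} Extend $\Psi$ by $0$ on $(-\infty,0)$. Apply the Itô formula for jump diffusions to $\Psi(X_{t\wedge\tau})$. The IDE says exactly that the generator annihilates $\Psi$ on $(0,\infty)$, including the jump term $-\lambda\Psi(u)\overline F(u)$ that accounts for ruin. Therefore $\Psi(X_{t\wedge\tau})$ is a local martingale. Since it is bounded by $\max\Psi$, it is a true bounded martingale, and so $\Psi(u)=\mathbf E\,\Psi(X_{t\wedge\tau})$. Let $t\to\infty$. On $\{\tau<\infty\}$ we have $X_\tau<0$, so $\Psi(X_\tau)=0$. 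On $\{\tau=\infty\}$, condition \eqref{eq:nondegeneracy} gives $X_t\to\infty$ a.s., so $\Psi(X_t)\to1$. Bounded convergence then yields $\Psi(u)=\mathbf P(\tau=\infty)=\Phi(u)$.

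\emph{Main obstacle.} The delicate point is that the whole argument needs $\mathcal{H}$ to be a genuine function with the properties used above, even though its Maclaurin series diverges. Specifically, I need:
\begin{itemize}
\item a solution that is bounded and $C^1$ at $0$ with value $1$;
\item positivity of $\mathcal{H}$, or at least $0\le\Psi\le1$, which is needed both for boundedness and for the constant $\frac{c}{\lambda}+\int_0^\infty\mathcal{H}$ to be nonzero;
\item the $O(u^{-\gamma})$ decay.
\end{itemize}

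For the first, I would construct $\mathcal{H}$ as the unique solution of the Volterra-type equation \eqref{eq:integral_g} with $g(0)$ fixed. Alternatively, the variation of constants near the irregular point $0$ gives existence: the singular branch is $e^{\beta/u}$-large, so the recessive solution is unique, and it is the one with a finite limit at $0$.

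For positivity, I would use a maximum-principle argument: \eqref{eq:integral_g} has a positive right-hand side whenever $g\ge0$ on $[0,u]$, which should propagate $g>0$.

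If decay at infinity is hard to get from connection constants, I would instead compare with \cite[Theorem 1]{P2026}. There, $\Phi'$ is a solution of the same ODE that is integrable and bounded at $0$, so by uniqueness of the recessive branch $\Phi'$ is proportional to $\mathcal{H}$. The normalization then fixes $C$. This alternative route can also replace Step 3 entirely.
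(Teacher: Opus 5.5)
Your proposal is correct and follows essentially the paper's route. Section~\ref{sec:solution} derives the formula by exactly your fallback argument: by \cite{P2026}, $\Phi'$ solves \eqref{eq:ODE_final}, the $e^{\beta/u}u^{-\gamma}$ branch is excluded, and $c\Phi'(0)=\lambda\Phi(0)$ together with normalization fixes $C$. Section~\ref{sec:verification} then confirms it with the same It\^o/bounded-martingale verification as your Step~3. Your Step~1 (forcing $K=0$ via $\Psi(0)=cC/\lambda$) and your first-zero argument for $\mathcal{H}>0$ make explicit three points the paper only asserts ``by construction'': that the candidate satisfies \eqref{eq:IDE}, that $0\le\tilde{\Phi}\le1$, and that $C$ is well defined and positive.
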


The availability of an exact solution allows us to independently confirm the qualitative effect established in the asymptotic theory: even with exponentially distributed claims, the inclusion of a risky asset leads to a power-law decay of the ruin probability.

Let us define the ruin probability as $\Psi(u)=1-\Phi(u)$.

\begin{proposition}
    \label{prop:ruin_asymptotics}
    As $u\to\infty$, the following asymptotic relation holds:
    \begin{equation*}
        \Psi(u)\sim\mathcal{K}\,u^{-(\gamma-1)},
    \end{equation*}
    where $\mathcal{K}>0$ is a constant expressed in terms of the model parameters and connection coefficients, and $\gamma=2a_\kappa/\sigma_\kappa^2$.
\end{proposition}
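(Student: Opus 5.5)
The plan is to write the ruin probability as a tail integral of the density and then read off its decay from the connection formula \eqref{eq:asymptotic_g}. From \eqref{eq:Phi_explicit} and the normalization \eqref{eq:norm_condition},
\[
\Psi(u)=1-\Phi(u)=C\int_u^\infty \mathcal{H}(v)\,dv=\int_u^\infty g(v)\,dv ,
\]
which is legitimate because $\mathcal{H}\in L^1(\mathbb{R}_+)$ for $\gamma>1$.

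Next I substitute the large-$u$ behaviour \eqref{eq:asymptotic_g}. By the Thomae-series construction, $y_1(\zeta)=(-\zeta)^{-\gamma}(1+O(|\zeta|^{-1}))$ and $\tilde y_1(\zeta)=O(e^{\zeta}|\zeta|^{-2})$ as $\zeta\to-\infty$. Setting $\zeta=-\mu v$ gives
\[
g(v)=C K_1\mu^{-\gamma}v^{-\gamma}\bigl(1+O(v^{-1})\bigr)+O\bigl(e^{-\mu v}v^{-2}\bigr),\qquad v\to\infty .
\]
Integrating term by term over $(u,\infty)$, the exponential part contributes $O(e^{-\mu u})$ and the correction term contributes $O(u^{-\gamma})$. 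This leaves
\[
\Psi(u)=\frac{CK_1\mu^{-\gamma}}{\gamma-1}\,u^{-(\gamma-1)}+O(u^{-\gamma}),
\]
so the proposition holds with $\mathcal{K}=CK_1\mu^{-\gamma}/(\gamma-1)$, provided $K_1>0$. Strictly, one should note that the Thomae series are asymptotic, not convergent. The standard theory for rank-one irregular points still yields genuine solutions with these asymptotics in a sector containing the negative real axis, with uniform remainder bounds, so term-by-term integration of the remainder is justified.

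The main obstacle is showing $K_1>0$; the connection coefficient is not explicit. I would try a probabilistic argument first. Since $0\le\Phi\le1$ and $\Phi(u)\to1$, we have $\Psi\ge0$, and $\Psi(u)>0$ for every $u$, because with positive probability a single claim exceeding $u$ plus all accumulated capital arrives quickly. If $K_1=0$, then $g$, and hence $\Psi$, would decay exponentially, like $e^{-\mu u}u^{-2}$. That contradicts a known power-law lower bound. The lower bound comes from the diffusion part: with probability of order $u^{-(\gamma-1)}$, the geometric Brownian component started at $u$ drops to a bounded level before any large claims, since $(X)^{1-\gamma}$ is approximately a martingale for the pure diffusion $dX=a_\kappa X\,dt+\sigma_\kappa X\,dW$. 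From that bounded level, ruin then has probability bounded below. This is Frolova--Kabanov--Pergamenshchikov-type reasoning \cite{Frolova2002}, and it gives $\Psi(u)\ge c_0u^{-(\gamma-1)}$.

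This lower bound forces $K_1\neq0$. Positivity of $\Psi$ together with the sign of the leading term then gives $K_1>0$, and hence $\mathcal{K}>0$. As an alternative, should the probabilistic bound be awkward, I would try a maximum-principle or Sturm-type argument on \eqref{eq:ODE_final} showing that $g>0$ cannot decay exponentially. Either way, $K_1$ and $C$ are determined by the model parameters, so $\mathcal{K}$ is expressed through them as claimed.
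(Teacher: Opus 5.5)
Your proposal follows the paper's proof: you write $\Psi(u)=C\int_u^\infty\mathcal{H}(v)\,dv$, insert the connection asymptotics \eqref{eq:asymptotic_g}, and integrate to get $\mathcal{K}=CK_1\mu^{-\gamma}/(\gamma-1)$. Where you depart from it, you improve on it. The paper only asserts $K_1\neq0$ (citing \cite{Paulsen1993} and the numerics) and never justifies the sign of $\mathcal{K}$. Your lower bound $\Psi(u)\ge c_0u^{-(\gamma-1)}$, together with $C=\lambda\Phi(0)/c>0$, gives $K_1>0$ directly, and your explicit $O(u^{-\gamma})$ remainder replaces the paper's L'H\^opital/Karamata remark. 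To make that bound rigorous, compare $X$ pathwise with the claim-free diffusion, whose scale density is $x^{-\gamma}e^{\beta/x}$; it reaches a fixed level $L$ with probability of order $u^{1-\gamma}$, after which ruin has probability at least $\Psi(L)>0$.
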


\begin{proof}
    From the integral representation \eqref{eq:Phi_explicit} and the normalization condition, we have:
    \[
    \Psi(u)=\int_u^\infty g(v)\,dv=C\int_u^\infty\mathcal{H}(v)\,dv.
    \]
    According to the asymptotic analysis in \eqref{eq:asymptotic_g}, the leading term in the expansion of $\mathcal{H}(v)$ as $v\to\infty$ has the form $K_1(\mu v)^{-\gamma}$ with some connection constant $K_1$; it is known that $K_1\neq0$ for the parameters under consideration (see, for instance, the asymptotic results in \cite{Paulsen1993} or the numerical data in Section \ref{sec:numerics}). Consequently,
    \[
    \Psi(u)\sim C K_1\mu^{-\gamma}\int_u^\infty v^{-\gamma}\,dv = \frac{C K_1\mu^{-\gamma}}{\gamma-1}\,u^{-(\gamma-1)}.
    \]
    Thus, $\mathcal{K}=C K_1\mu^{-\gamma}/(\gamma-1)>0$. Here, we used the fact that integration lowers the asymptotic order by one, which is formally justified by L'H\^opital's rule or Karamata's theorem on regular variation.
\end{proof}

\section{Verification theorem}
\label{sec:verification}

In the previous section, a global analytical solution $g(u)$ was constructed, and the normalization constant $C$ ensuring the fulfillment of the boundary conditions was determined. Let us show that the corresponding candidate function $\tilde{\Phi}(u)$, defined by formula \eqref{eq:Phi_explicit}, indeed coincides with the survival probability.

We define the candidate function on the entire real line by extending it with zero on the negative half-line:
\begin{equation*}
    \tilde{\Phi}(u):=\begin{cases}
        C\left(\dfrac{c}{\lambda}+\displaystyle\int_0^u\mathcal{H}(v)\,dv\right), & u\ge0,\\[6pt]
        0, & u<0,
    \end{cases}
\end{equation*}
where $\mathcal{H}(v)$ is the Heun function from Theorem \ref{thm:exact_solution}, and $C$ is the normalization constant. By construction, $\tilde{\Phi}\in C^2((0,\infty))\cap C^1([0,\infty))$, $0\le\tilde{\Phi}\le1$, $\tilde{\Phi}(u)$ satisfies the integro-differential equation \eqref{eq:IDE} for all $u>0$, and $\lim_{u\to\infty}\tilde{\Phi}(u)=1$.

\begin{theorem}
    Let the condition $\gamma>1$ hold. Then for any $u\ge0$,
    \begin{equation*}
        \tilde{\Phi}(u)=\mathbf{P}\bigl(\tau=\infty\mid X_0=u\bigr),
    \end{equation*}
    where $\tau=\inf\{t>0:X_t<0\}$ is the time of ruin.
\end{theorem}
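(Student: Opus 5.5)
We follow the standard martingale verification scheme. Apply the generalized It\^o formula (for semimartingales with jumps) to the process $\tilde{\Phi}(X_{t\wedge\tau})$. Since $\tilde{\Phi}\in C^2((0,\infty))\cap C^1([0,\infty))$ and, as noted in Section~\ref{sec:model}, zero is not reached continuously, the continuous part of $X$ stays in $(0,\infty)$ on $[0,\tau)$. Hence It\^o's formula is legitimate up to $\tau$; if needed, we first stop at the exit time of $[\varepsilon,n]$ and then let $\varepsilon\to0$, $n\to\infty$. We decompose
\[
\tilde{\Phi}(X_{t\wedge\tau})=\tilde{\Phi}(u)+\int_0^{t\wedge\tau}\mathcal{A}\tilde{\Phi}(X_{s-})\,ds+M_t,
\]
where
\[
\mathcal{A}f(x)=\tfrac12\sigma_\kappa^2x^2f''(x)+(a_\kappa x+c)f'(x)+\lambda\int_0^\infty\bigl(f(x-y)-f(x)\bigr)\,dF(y),
\]
and $M$ is the sum of the stochastic integral $\int \sigma_\kappa X_{s}\tilde{\Phi}'(X_s)\,dW_s$ and the compensated jump integral. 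Because $\tilde{\Phi}=0$ on $(-\infty,0)$, the integral over $y>x$ in $\mathcal{A}\tilde{\Phi}(x)$ equals $-\tilde{\Phi}(x)\overline{F}(x)$. Therefore $\mathcal{A}\tilde{\Phi}=0$ on $(0,\infty)$ is exactly the IDE \eqref{eq:IDE}, and the drift term vanishes. At a ruin jump the process lands in $(-\infty,0)$, where $\tilde{\Phi}$ is $0$; this is precisely how the extension by zero is encoded in the compensator.

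Next we show that $\tilde{\Phi}(X_{t\wedge\tau})$ is a true martingale, not just a local one. The jump part is harmless because $0\le\tilde{\Phi}\le1$, so the jump integrand is bounded and its compensated integral is a square-integrable martingale. For the Brownian part we use the localization $\tau_n=\inf\{t:X_t>n\}$. The stopped local martingale $\tilde{\Phi}(X_{t\wedge\tau\wedge\tau_n})$ is bounded by $1$, hence a genuine martingale, and the identity $\tilde{\Phi}(u)=\mathbf{E}\,\tilde{\Phi}(X_{t\wedge\tau\wedge\tau_n})$ holds. Since $\tau_n\to\infty$ a.s. (the SDE has no explosion, with linear growth coefficients), bounded convergence gives $\tilde{\Phi}(u)=\mathbf{E}\,\tilde{\Phi}(X_{t\wedge\tau})$ for all $t$. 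Alternatively, $\tilde{\Phi}(X_{t\wedge\tau})$ is itself a bounded local martingale, hence a martingale.

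Finally we let $t\to\infty$. On $\{\tau<\infty\}$ we have $\tilde{\Phi}(X_\tau)=0$ because $X_\tau<0$ a.s. On $\{\tau=\infty\}$ we need $X_t\to\infty$ a.s., so that $\tilde{\Phi}(X_t)\to1$ by $\lim_{u\to\infty}\tilde{\Phi}(u)=1$. Bounded convergence then gives
\[
\tilde{\Phi}(u)=\mathbf{E}\bigl[\mathbf{1}_{\{\tau=\infty\}}\bigr]=\Phi(u).
\]

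\textbf{The main obstacle} is this last dichotomy: showing that a.s. either $\tau<\infty$ or $X_t\to\infty$. This is where $\gamma>1$ enters. Our first approach is the explicit variation-of-constants representation
\[
X_t=\mathcal{E}_t\Bigl(u+\int_0^t\mathcal{E}_s^{-1}\,(c\,ds-dZ_s)\Bigr),\qquad \mathcal{E}_t=\exp\Bigl(\bigl(a_\kappa-\tfrac12\sigma_\kappa^2\bigr)t+\sigma_\kappa W_t\Bigr).
\]
Since $\mathcal{E}_t\to\infty$ exponentially fast and the discounted perpetuity $\int_0^\infty\mathcal{E}_s^{-1}(c\,ds-dZ_s)$ converges a.s. by the law of the iterated logarithm, we get $X_t/\mathcal{E}_t\to Y_\infty$ a.s. It remains to exclude $Y_\infty=0$ with $X_t\ge0$ for all $t$ on an event of positive probability, and this is the delicate point. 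If the direct argument fails, we would fall back on the martingale itself. Because $\tilde{\Phi}(X_{t\wedge\tau})$ converges a.s., on $\{\tau=\infty\}$ the limit $\lim\tilde{\Phi}(X_t)$ exists. On $\{\tau=\infty\}$ the process visits every bounded interval $[0,K]$ infinitely often only with probability zero, since from each visit ruin has probability at least $1-\Phi(K)>0$, which is positive by Proposition~\ref{prop:ruin_asymptotics}. A conditional Borel--Cantelli argument (L\'evy's $0$--$1$ law) then forces $X_t\to\infty$ on $\{\tau=\infty\}$.
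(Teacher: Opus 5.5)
Your argument is the paper's verification proof: It\^o's formula for $\tilde{\Phi}(X_{t\wedge\tau})$; the drift vanishes because the zero extension turns $\mathcal{A}\tilde{\Phi}=0$ into \eqref{eq:IDE}; boundedness upgrades the local martingale to a martingale; and the limit $t\to\infty$ uses $X_\tau<0$ on $\{\tau<\infty\}$ and $X_t\to\infty$ on $\{\tau=\infty\}$. The paper settles the last fact by citing \cite{Paulsen1993}. You try to prove it, and one step there does not stand as written.

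\textbf{The circular step.} In your L\'evy $0$--$1$ argument, the input $\inf_{x\in[0,K]}\bigl(1-\Phi(x)\bigr)>0$ cannot be taken from Proposition~\ref{prop:ruin_asymptotics}, for three reasons. First, that proposition concerns the candidate formula \eqref{eq:Phi_explicit}, and identifying it with the true survival probability is exactly what you are proving. Second, it only describes large $u$, so it says nothing about bounded $u$ without a separate monotonicity argument. Third, its $\mathcal{K}>0$ rests on $K_1\neq0$, which the paper does not establish.

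\textbf{An elementary replacement.} By your variation-of-constants formula, $X^x_t-X^K_t=\mathcal{E}_t(x-K)\le0$, so $X^x\le X^K$ pathwise for $x\le K$. Choose $K'$ with $\mathbf{P}(\sup_{s\le1}D^K_s\le K')\ge\frac12$, where $D^K$ is the jump-free diffusion started at $K$. The events $\{\sup_{s\le1}D^K_s\le K'\}$, $\{T_1\le1\}$ and $\{\xi_1>K'\}$ are independent, with $T_1$ the first claim time. On their intersection, $X^x_{T_1-}\le K'<\xi_1$, so ruin occurs. Hence $1-\Phi(x)\ge\frac12(1-e^{-\lambda})e^{-\mu K'}$ uniformly in $x\in[0,K]$. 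Combine this with
$\mathbf{P}(\tau<\infty\mid\mathcal{F}_t)=\mathbf{1}_{\{\tau\le t\}}+\mathbf{1}_{\{\tau>t\}}\bigl(1-\Phi(X_t)\bigr)$ (Markov property) and L\'evy's theorem. This gives $X_t\to\infty$ on $\{\tau=\infty\}$, and this route does not use $\gamma>1$ at all.

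\textbf{Closing your first route.} Your first route can also be completed. Conditionally on $W$, $N$ and $(\xi_k)_{k\ge2}$, the limit $Y_\infty=u+c\int_0^\infty\mathcal{E}_s^{-1}\,ds-\sum_k\mathcal{E}_{T_k}^{-1}\xi_k$ (with $T_k$ the claim times) is affine in $\xi_1$ with slope $-\mathcal{E}_{T_1}^{-1}\neq0$. Since $\xi_1$ has a density, $\mathbf{P}(Y_\infty=0)=0$. Here $\gamma>1$ is exactly what makes the integral and the series converge.
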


\begin{proof}
    Fix an initial capital $X_0=u>0$. We apply the generalized It\^o formula for semimartingales (see \cite[Theorem 33, Chapter II]{Protter2005}) to the process $\tilde{\Phi}(X_t)$ on the stochastic interval $[0,t\wedge\tau]$. We obtain:
    \begin{multline}
        \tilde{\Phi}(X_{t\wedge\tau})=\tilde{\Phi}(u)+\int_0^{t\wedge\tau}\mathcal{A}\tilde{\Phi}(X_{s-})\,ds\\
        +\int_0^{t\wedge\tau}\sigma_\kappa X_{s-}\tilde{\Phi}'(X_{s-})\,dW_s
        +\int_0^{t\wedge\tau}\int_0^\infty\Delta\tilde{\Phi}(X_{s-},y)\,\tilde{N}(ds,dy),
        \label{eq:Ito}
    \end{multline}
    where $\tilde{N}(ds,dy)$ is the compensated Poisson measure, $\Delta\tilde{\Phi}(x,y)=\tilde{\Phi}(x-y)-\tilde{\Phi}(x)$, and $\mathcal{A}$ is the infinitesimal generator of the process $X$:
    \[
    \mathcal{A}\tilde{\Phi}(x)=\frac12\sigma_\kappa^2x^2\tilde{\Phi}''(x)+(a_\kappa x+c)\tilde{\Phi}'(x)+\lambda\int_0^\infty\bigl(\tilde{\Phi}(x-y)-\tilde{\Phi}(x)\bigr)dF(y).
    \]
    In the integral term of the generator, the extension $\tilde{\Phi}(z)=0$ for $z<0$ is used, which correctly accounts for the jumps leading to ruin. Since $\tilde{\Phi}$ is a solution of \eqref{eq:IDE}, we have $\mathcal{A}\tilde{\Phi}(x)\equiv0$ for all $x>0$. Consequently, the time integral in \eqref{eq:Ito} vanishes.
    
    Let us denote the stochastic part in \eqref{eq:Ito} by $M_t$. The process $Y_t:=\tilde{\Phi}(X_{t\wedge\tau})=\tilde{\Phi}(u)+M_t$ is a local martingale. Due to the boundedness of $\tilde{\Phi}$ ($0\le\tilde{\Phi}\le1$), the process $Y_t$ is uniformly bounded and is therefore a uniformly integrable martingale (see, e.g., \cite[Chapter I, Theorem 51]{Protter2005}). It follows that:
    \begin{equation*}
        \mathbf{E}_u\bigl[\tilde{\Phi}(X_{t\wedge\tau})\bigr]=\tilde{\Phi}(u)\qquad\forall\,t\ge0.
    \end{equation*}
    Taking the limit as $t\to\infty$, the boundedness of $\tilde{\Phi}$ allows us to apply Lebesgue's dominated convergence theorem:
    \[
    \tilde{\Phi}(u)=\lim_{t\to\infty}\mathbf{E}_u\bigl[\tilde{\Phi}(X_{t\wedge\tau})\bigr]
    =\mathbf{E}_u\Bigl[\lim_{t\to\infty}\tilde{\Phi}(X_{t\wedge\tau})\Bigr].
    \]
    Let us analyze the almost sure limit $\eta:=\lim_{t\to\infty}\tilde{\Phi}(X_{t\wedge\tau})$ by splitting the sample space into two disjoint sets.
    
    \textbf{On the ruin set $\{\tau<\infty\}$.} As noted in Section \ref{sec:model}, zero is inaccessible for the continuous component, so ruin occurs exactly at a jump time: $X_\tau<0$ a.s. Since $\tilde{\Phi}(x)=0$ for $x<0$, we have $\tilde{\Phi}(X_\tau)=0$. After stopping, the process remains at the level $X_\tau$, and for all $t\ge\tau$, $\tilde{\Phi}(X_{t\wedge\tau})=0$ holds. Consequently, $\eta=0$.
    
    \textbf{On the survival set $\{\tau=\infty\}$.} Here, the trajectory $X_t$ remains strictly positive for all $t$. The condition $\gamma>1$ implies strict positivity of the asymptotic drift, which guarantees that $X_t\to+\infty$ a.s. for the process described by \eqref{eq:SDE} (see \cite{Paulsen1993}). From the boundary condition $\lim_{x\to\infty}\tilde{\Phi}(x)=1$, we obtain $\eta=1$.
    
    Thus, $\eta=\mathbf{1}_{\{\tau=\infty\}}$ a.s. Substituting this into the expression for the mathematical expectation, we find:
    \[
    \tilde{\Phi}(u)=\mathbf{E}_u\bigl[\mathbf{1}_{\{\tau=\infty\}}\bigr]=\mathbf{P}(\tau=\infty\mid X_0=u).
    \]
\end{proof}

\section{Qualitative analysis and numerical illustrations}
\label{sec:numerics}

The exact analytical solution \eqref{eq:Phi_explicit} obtained in Section \ref{sec:solution} reduces the problem of finding the survival probability to the integration of a regular Heun function. From a computational perspective, this is equivalent to numerical integration of the linear ODE \eqref{eq:ODE_final} with exactly evaluated initial conditions. The availability of explicit formulas allows for an in-depth analysis of the risk profile without resorting to computationally expensive Monte Carlo methods, and avoiding the errors inherent to asymptotic approximations in the small-capital regime.

To illustrate the analytical results, we fix the following set of parameters, which is typical for insurance market models:
\begin{itemize}
    \item Poisson claim intensity: $\lambda = 1.0$;
    \item Exponential claim size parameter: $\mu = 1.0$;
    \item Financial market parameters: risk-free rate $r = 0.05$, expected return of the risky asset $a = 0.15$, volatility $\sigma = 0.4$;
    \item Premium rate: $c = 0.5$.
\end{itemize}
We investigate the dependence of the ruin probability $\Psi(u) = 1 - \Phi(u)$ on the proportion of capital $\kappa$ invested in the risky asset. Three investment strategies are considered: conservative ($\kappa = 0.2$), moderate ($\kappa = 0.4$), and aggressive ($\kappa = 0.9$).

According to Proposition \ref{prop:ruin_asymptotics}, for any strategy with $\kappa > 0$, the leading term of the ruin probability asymptotics at infinity is given by $\Psi(u) \sim \mathcal{K} u^{-(\gamma-1)}$. This means that investing in a risky asset (geometric Brownian motion) always generates a risk distribution with a heavy (power-law) tail. The tail decay exponent $\gamma$ is a structural function of the risky investment share $\kappa$:
\begin{equation*}
    \gamma(\kappa) = \frac{2\bigl(r + \kappa(a-r)\bigr)}{\kappa^2 \sigma^2}.
\end{equation*}
The values of the tail decay exponent for the selected strategies are presented in Table \ref{tab:gamma_values}.

\begin{table}[ht]
    \caption{Dependence of the power-law decay exponent $\gamma$ on the risky investment share $\kappa$}
    \label{tab:gamma_values}
    \begin{tabular}{lccc}
        \hline\noalign{\smallskip}
        \textbf{Strategy} & \textbf{Share} $\kappa$ & \textbf{Exponent} $\gamma(\kappa)$ & \textbf{Asymptotics} $\Psi(u)$ \\
        \noalign{\smallskip}\hline\noalign{\smallskip}
        Conservative & $0.2$ & $21.875$ & $O(u^{-20.875})$ \\
        Moderate     & $0.4$ & $\approx 7.031$ & $O(u^{-6.031})$ \\
        Aggressive   & $0.9$ & $\approx 2.160$ & $O(u^{-1.160})$ \\
        \noalign{\smallskip}\hline
    \end{tabular}
\end{table}

Although the asymptotic behavior formally remains power-law for any $\kappa \in (0, 1]$, the risk profile critically depends on the value of $\gamma$. For small values of $\kappa$ (conservative strategy), the parameter $\gamma$ is large. In this case, the ruin probability decays so rapidly that over small capital intervals, the risk drops to negligibly small values. As the share of risky investments increases, $\gamma$ decreases sharply. For the aggressive strategy, the value $\gamma \approx 2.16$ is close to the boundary $\gamma = 1$. In this case, the ruin risk becomes more sensitive to the initial capital, and the ruin probability decays orders of magnitude slower.

\begin{figure}[htbp]
    \centering
    \includegraphics[width=0.95\textwidth]{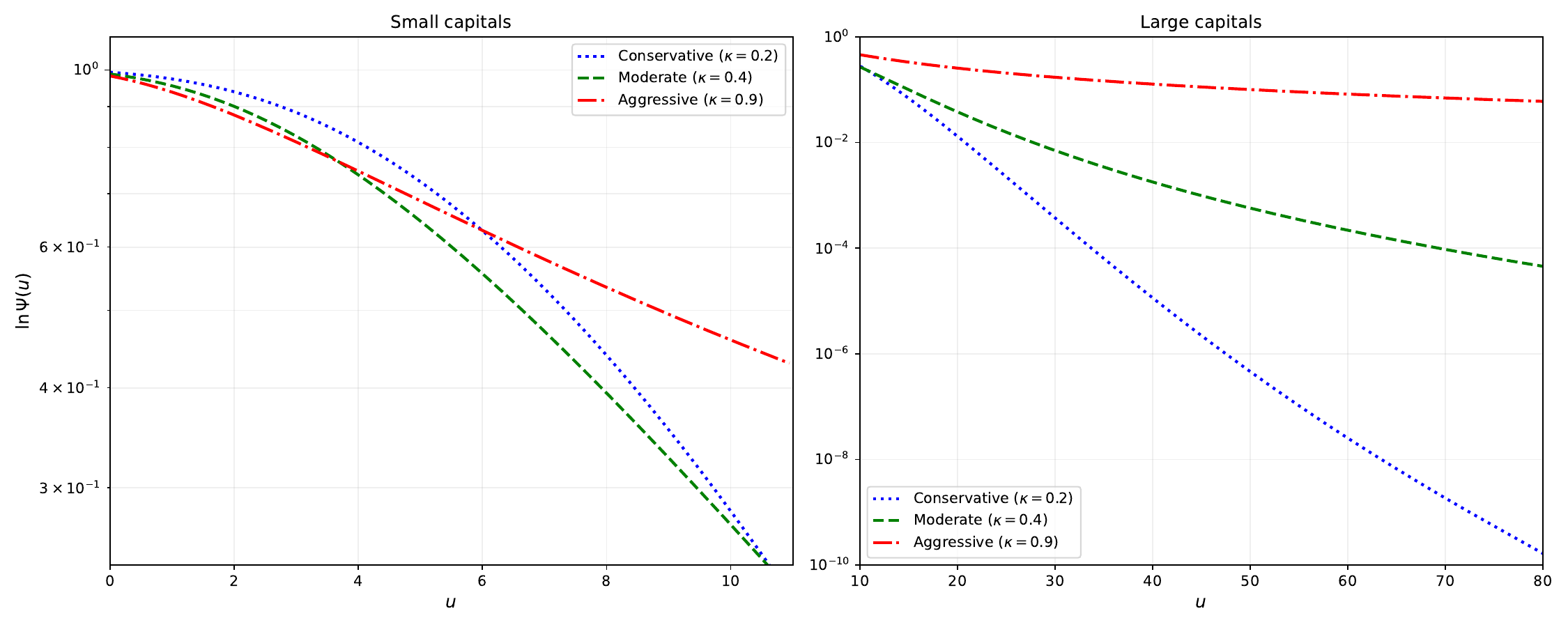}
    \caption{Dependence of the ruin probability on the initial capital in a semi-logarithmic scale. The convexity of the curves illustrates the deviation from a purely exponential law. The crossing of the lines demonstrates that the aggressive strategy is preferable exclusively for small initial capitals.}
    \label{fig:ruin_prob_linear}
\end{figure}

\paragraph{Non-monotonicity of risk.}
Figure \ref{fig:ruin_prob_linear} shows the crossing of ruin probability curves in a semi-logarithmic scale.
In the region of small initial capitals ($u < 10$), an increase in the risky investment share leads to a decrease in the ruin probability. This is explained by the fact that the high positive drift of the aggressive investment portfolio facilitates a rapid escape of the capital process from the ruin boundary $u=0$.
However, in the region of large capitals ($u > 20$), the situation changes drastically. The dominant factor becomes the high volatility of the aggressive strategy, due to which the ruin probability turns out to be several orders of magnitude higher than under conservative management.

\begin{figure}[htbp]
    \centering
    \includegraphics[width=0.55\textwidth]{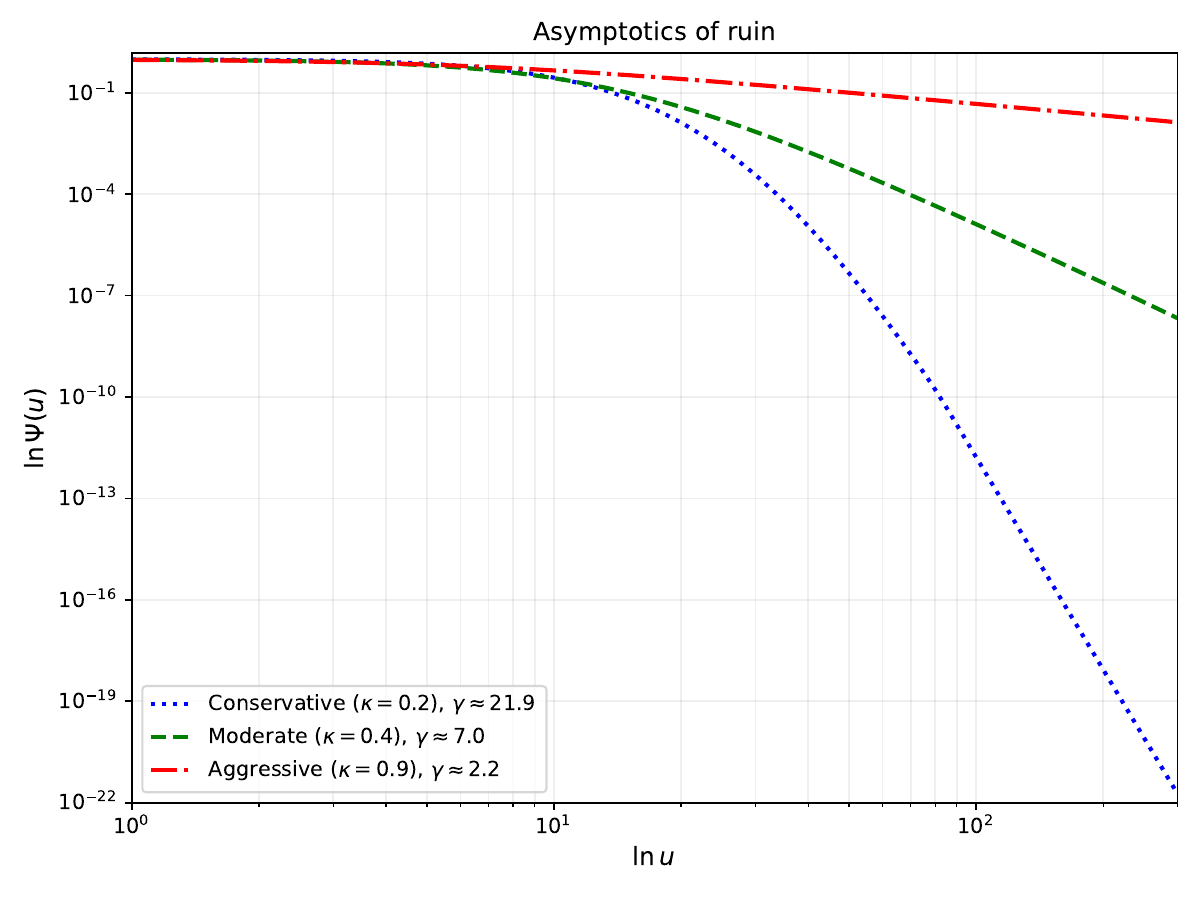}
    \caption{Ruin probability on a log-log scale. The linear asymptotic segment for the aggressive strategy confirms the power-law nature of the risk decay.}
    \label{fig:ruin_prob_loglog}
\end{figure}

\paragraph{Visualization of heavy tails.}
Figure \ref{fig:ruin_prob_loglog} clearly visualizes the difference in the asymptotic decay rate on a log-log scale. For the aggressive strategy ($\kappa=0.9$), the graph becomes a straight line with a small negative slope, which is an indicator of a power-law tail $\Psi(u) \propto u^{-1.16}$. For the conservative strategy, a steep drop is observed, indicating that the risk becomes negligibly small even for moderate values of the capital.

\section*{Competing interests}
The authors declare no competing interests.


\begin{thebibliography}{99}

\bibitem{Albrecher2012}
Albrecher, H., Constantinescu, C., Thomann, E.: 
Asymptotic results for renewal risk models with risky investments. 
Stochastic Process. Appl. \textbf{122}(11), 3767--3789 (2012)

\bibitem{AK2026}
Antipov V., Kabanov Y.:
On the integro-differential equation arising in the ruin problem for non-life insurance models with investment.
Mathematics. \textbf{14}(6), 1035 (2026)


\bibitem{Eberlein2022}
Eberlein, E., Kabanov, Y., Schmidt, T.: 
Ruin probabilities for a Sparre Andersen model with investments. 
Stochastic Process. Appl. \textbf{144}, 72--84 (2022)

\bibitem{Frolova2002}
Frolova, A., Kabanov, Y., Pergamenshchikov, S.: 
In the insurance business risky investments are dangerous. 
Finance Stoch. \textbf{6}(2), 227--235 (2002)

\bibitem{Goldie1991}
Goldie, C.M.: 
Implicit renewal theory and tails of solutions of random equations. 
Ann. Appl. Probab. \textbf{1}(1), 126--166 (1991)

\bibitem{Grandits2004}
Grandits, P.: 
A Karamata-type theorem and ruin probabilities for an insurer investing proportionally in the stock market. 
Insur. Math. Econ. \textbf{34}(2), 297--305 (2004)

\bibitem{KLP2025}
Kabanov, Y., Legenkiy, D., Promyslov, P.: 
Distributional equations and the ruin problem for the Sparre Andersen model with investments. 
Extremes \textbf{29}, 65--87 (2026)

\bibitem{KPro2023}
Kabanov, Y., Promyslov, P.: 
Ruin probabilities for a Sparre Andersen model with investments: the case of annuity payments. 
Finance Stoch. \textbf{27}, 887--902 (2023)

\bibitem{KPuk}
Kabanov, Y., Pukhlyakov, N.: 
Ruin probabilities with investments: smoothness, IDE and ODE, asymptotic behavior. 
J. Appl. Probab. \textbf{59}(2), 556--570 (2020)

\bibitem{Kalashnikov2002}
Kalashnikov, V., Norberg, R.: 
Power tailed ruin probabilities in the presence of risky investments. 
Stochastic Process. Appl. \textbf{98}(2), 211--228 (2002)

\bibitem{Karatzas1991}
Karatzas, I., Shreve, S.E.: 
Brownian Motion and Stochastic Calculus, 2nd edn. 
Springer, New York (1991)

\bibitem{Paulsen1993}
Paulsen, J.: 
Risk theory in a stochastic economic environment. 
Stochastic Process. Appl. \textbf{46}(2), 327--361 (1993)

\bibitem{Paulsen1998}
Paulsen, J.: 
Sharp conditions for certain ruin in a risk process with stochastic return on investments. 
Stochastic Process. Appl. \textbf{75}(1), 135--148 (1998)

\bibitem{Pergamenshchikov2006}
Pergamenshchikov, S., Zeitouni, O.: 
Ruin probability in the presence of risky investments. 
Stochastic Process. Appl. \textbf{116}(2), 267--278 (2006)

\bibitem{P2026}
Promyslov P.:
Existence of a classical solution to the integro-differential equation arising in the Cram\'er--Lundberg non-life insurance model with proportional investment.
arXiv preprint arXiv:2604.05143v1 (2026)

\bibitem{Protter2005}
Protter, P.: 
Stochastic Integration and Differential Equations, 2nd edn. 
Springer, Berlin (2005)

\bibitem{Slavyanov2002}
Slavyanov, S.Y., Lay, W.: 
Special Functions: A Unified Theory Based on Singularities. 
Oxford University Press, Oxford (2000)

\end{thebibliography}
\end{document}